\newcommand{\C}{\mathbb{C}}
\newcommand{\N}{\mathbb{N}}
\newcommand{\norm}[1]{\left\| #1 \right\|}
\newtheorem{theorem}{Theorem}[section]
\newtheorem{lemma}[theorem]{Lemma}
\theoremstyle{definition}
\newtheorem{definition}[theorem]{Definition}
\theoremstyle{theorem}
\theoremstyle{theorem}
\newtheorem{proposition}[theorem]{Proposition}
\theoremstyle{theorem}
\theoremstyle{theorem}
\theoremstyle{definition}
\theoremstyle{theorem}
\numberwithin{equation}{section}
\begin{document}
\title{A note on the computable categoricity of $l^p$ spaces}
\author{Timothy H. McNicholl}
\address{Department of Mathematics\\
Iowa State University\\
Ames, Iowa 50011}
\email{mcnichol@iastate.edu}

\begin{abstract}
Suppose that $p$ is a computable real and that $p \geq1$.  We show that in both the real and complex case, $\ell^p$ is computably categorical if and only if $p = 2$.  The proof uses Lamperti's characterization of the isometries of Lebesgue spaces of $\sigma$-finite measure spaces.  
\end{abstract}
\maketitle

\section{Introduction}\label{sec:intro}

When $p$ is a positive real number, let $\ell^p$ denote the space of all sequences of complex numbers $\{a_n\}_{n = 0}^\infty$ so that 
\[
\sum_{n = 0}^\infty |a_n|^p < \infty.
\]
$\ell^p$ is a vector space over $\C$ with the usual scalar multiplication and vector addition.  When $p \geq 1$ it is a Banach space under the norm defined by 
\[
\norm{\{a_n\}_n } = \left( \sum_{n = 0}^\infty |a_n|^p \right)^{1/p}.
\]  

Loosely speaking, a computable structure is \emph{computably categorical} if all of its computable copies are computably isomorphic.  In 1989, Pour-El and Richards showed that $\ell^1$ is not computably categorical \cite{Pour-El.Richards.1989}.  It follows from a recent result of A.G. Melnikov that $\ell^2$ is computably categorical \cite{Melnikov.2013}.  At the 2014 Conference on Computability and Complexity in Analysis, A.G. Melnikov asked ``For which computable reals $p \geq 1$ is $\ell^p$ computably categorical?"  The following theorem answers this question.

\begin{theorem}\label{thm:main.1}
Suppose $p$ is a computable real so that $p \geq 1$.  Then, $\ell^p$ is computably categorical if and only if $p = 2$.
\end{theorem}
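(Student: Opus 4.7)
The forward direction that $p = 2$ yields computable categoricity is already recorded as a consequence of Melnikov's theorem \cite{Melnikov.2013}, so the substantive task is the converse: for each computable real $p \geq 1$ with $p \neq 2$, exhibit two computable copies of $\ell^p$ that are not computably isometrically isomorphic. Pour-El and Richards' construction for $p = 1$ supplies the template to be generalized.

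The essential leverage comes from Lamperti's theorem: when $p \neq 2$, every surjective linear isometry $T$ of $\ell^p$ permutes a scalar-multiple copy of the standard basis, i.e., there exist a permutation $\sigma$ of $\N$ and unimodular scalars $\lambda_n$ with $T(e_n) = \lambda_n e_{\sigma(n)}$. This rigidity is what one exploits. The plan is to build a computable copy $\mathcal{B}$ of $\ell^p$ and encode a c.e.\ but noncomputable set $A \subseteq \N$ into the choice of distinguished basis of $\mathcal{B}$, in the following sense: supply a computable list $\{v_n\}_{n \in \N}$ of unit vectors of $\mathcal{B}$ together with a computable ``shadow'' list $\{v'_n\}$ of nearby unit vectors, arranged so that the actual isometric image of the standard basis $(e_n)$ in $\mathcal{B}$ is $v_n$ when $n \in A$ and $v'_n$ when $n \notin A$. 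By Lamperti, any linear isometry $\ell^p \to \mathcal{B}$ must send each $e_n$ to a unimodular multiple of a genuine basis vector of $\mathcal{B}$; reading off which of $v_n, v'_n$ is selected then decides $A$. If no such isometry is computable, this gives the failure of computable categoricity.

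The concrete implementation would use a priority argument: at stage $s$ one extends finitely many coordinates of $\mathcal{B}$ while keeping open both the possibility that $n$ will later enter $A$ and that it will not, by placing both $v_n$ and $v'_n$ inside $\mathcal{B}$ as rational linear combinations of a background Schauder basis; when a decision about $n$ is forced by the enumeration, one subsequently adjusts only coordinates above a large threshold, so that in the limit exactly one of $v_n, v'_n$ is a bona fide basis element while the other becomes a small perturbation that is not. The effective $p$-norm computations, made possible by $p$ being computable, are used to verify that the resulting structure is indeed a computable presentation of $\ell^p$ (in particular, that all candidate basis vectors have norm $1$ and that the span is dense).

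The main obstacle will be the simultaneous effective control of norms and the basis property: one must choose the perturbations joining $v_n$ to $v'_n$ small enough in the $\ell^p$-norm that $\mathcal{B}$ is a legitimate computable $\ell^p$-space, yet rigid enough that Lamperti's characterization forces a would-be computable isomorphism to commit, at each $n$, to an answer about membership in $A$. For $p \neq 2$ the strict convexity and the failure of the parallelogram law provide the quantitative gap between a unit basis vector and any nearby non-basis unit vector that keeps the coding faithful; verifying this quantitatively, uniformly in $p$, is where the bulk of the technical work lies.
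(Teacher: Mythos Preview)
Your strategy of using Lamperti's rigidity to force any computable isometry to decode a noncomputable c.e.\ set is correct and matches the paper, but your implementation has a genuine gap and is far more elaborate than what is needed. The gap is that you never explain why the norm on your structure $\mathcal{B}$ is computable. You assert that ``effective $p$-norm computations, made possible by $p$ being computable,'' will verify this, but computability of $p$ is beside the point: the obstacle is that your generators $v_n, v'_n$ carry noncomputable information (which one is a true basis vector depends on $A$), and you must arrange that this information nevertheless \emph{cancels} when one computes $\norm{\sum \alpha_j v_j}$ from the rational data $\alpha_0,\ldots,\alpha_M$. Nothing in your sketch indicates how this is achieved, and your priority machinery does not touch it. This is precisely the crux of the problem: hiding noncomputable data in the generators while keeping the norm map on rational combinations computable. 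Your decoding step is also underspecified: if $v_n$ and $v'_n$ are genuinely ``nearby,'' it is unclear how a computable isometry lets you decide which one has singleton support, since that is a $\Pi_1$ condition.

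The paper's construction is direct, with no priority argument and no pairs. It takes a single exotic generator $f_0 = (1-\gamma)^{1/p} e_0 + \sum_{n\ge 0} 2^{-c_n/p} e_{n+1}$, where $\gamma = \sum_{k \in C} 2^{-k}$, together with $f_{n+1} = e_{n+1}$. The key computation is that $\norm{\sum_{j=0}^M \alpha_j f_j}^p = |\alpha_0|^p + \sum_{j=1}^M\bigl(|\alpha_0 2^{-c_{j-1}/p} + \alpha_j|^p - |\alpha_0|^p 2^{-c_{j-1}}\bigr)$: the incomputable $(1-\gamma)$ from the $e_0$-coordinate and the incomputable tail of $f_0$ combine so that $\gamma$ drops out entirely, and $F$ is an effective generating set. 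Yet by Lamperti any surjective isometry with respect to $(E,F)$ must send some $e_m$ to a unimodular multiple of $e_0$, and approximating $e_0$ in the $F$-presentation forces one to approximate the leading coefficient $(1-\gamma)^{-1/p}$, hence to compute $C$. No pairs, no stages, no injury --- just one carefully chosen vector whose norm is rigged to be exactly $1$.
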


We prove Theorem \ref{thm:main.1} by proving the following stronger result.

\begin{theorem}\label{thm:main.2}
Suppose $p$ is a computable real so that $p \geq 1$ and $p \neq 2$.  Suppose $C$ is a c.e. set.  Then, there is a computable copy of $\ell^p$, $\mathcal{B}$, so that $C$ computes a linear isometry of $\ell^p$ onto $\mathcal{B}$.  Furthermore, if an oracle $X$ computes a linear isometry of $\ell^p$ onto $\mathcal{B}$, then $X$ must also compute $C$.
\end{theorem}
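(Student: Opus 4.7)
The plan is to exploit Lamperti's rigidity theorem to reduce Theorem~\ref{thm:main.2} to a statement about the Turing degree at which a ``standard basis'' of $\mathcal{B}$ can be located inside $\mathcal{B}$'s computable dense sequence. For $p \neq 2$, Lamperti shows that every linear isometry $T \colon \ell^p \to \ell^p$ has the form $T(e_n) = \lambda_n e_{\sigma(n)}$ with $|\lambda_n| = 1$ and $\sigma$ a permutation of $\N$. This rigidity transfers to any $\mathcal{B}$ isometric to $\ell^p$: fixing an abstract standard basis $\{g_m\}$ of $\mathcal{B}$, every linear isometry $T \colon \ell^p \to \mathcal{B}$ satisfies $T(e_n) = \lambda_n g_{\sigma(n)}$. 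So computing $T$ is, up to unit scalars and a permutation that one is free to pick, equivalent to locating the $g_m$'s inside $\mathcal{B}$'s computable dense sequence.

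To construct $\mathcal{B}$, fix a computable enumeration $\{C_s\}_{s \in \N}$ of $C$ and partition the intended standard basis indices into pairs $\{2n, 2n+1\}$. For each $n$, the computable dense sequence of $\mathcal{B}$ will contain an ``unambiguous'' atom $a_n := g_{2n}$ together with a uniformly computable family of ``disguised'' vectors $b_{n, s}$ whose shape depends on whether $n \in C_s$; the disguise is arranged so that recovering $g_{2n+1}$ from the $b_{n, s}$'s requires knowing whether (and when) $n$ enters $C$. Norms of rational combinations must be uniformly computable so that $\mathcal{B}$ is a genuine computable Banach space. Given $C$, one can then effectively recover each $g_{2n+1}$ from the appropriate $b_{n, s}$, yielding a $C$-computable isometry $T \colon \ell^p \to \mathcal{B}$ by setting $T(e_{2n}) = a_n$ and $T(e_{2n+1}) = g_{2n+1}$ and extending by linearity and continuity.

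For the necessity direction, suppose $X$ computes a linear isometry $T \colon \ell^p \to \mathcal{B}$. Then $X$ computes approximations to $T(e_n) = \lambda_n g_{\sigma(n)}$, an atom of $\mathcal{B}$. Since $p \neq 2$, the relation of disjoint-supportedness in $\mathcal{B}$ is effectively recognizable from the norm via the Clarkson-type identity $\norm{x + y}^p + \norm{x - y}^p = 2(\norm{x}^p + \norm{y}^p)$, so $X$ can test which rational combinations of $a_n$'s and $b_{n, s}$'s are atoms and match them against the $X$-computed $T(e_n)$'s to reverse-engineer the disguise and hence compute $C$. The principal obstacle is calibrating the $b_{n, s}$'s: the disguise must be subtle enough to preserve oracle-free computability of norms, yet rigid enough that Lamperti's theorem forces any isometry to reveal it. Balancing these competing demands, tied to the c.e.\ enumeration of $C$, is the heart of the proof.
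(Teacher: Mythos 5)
Your high-level strategy is the right one and matches the paper's: code $C$ into an effective generating set (equivalently, into the dense sequence of $\mathcal{B}$) and use the Banach--Lamperti rigidity theorem to argue that any onto isometry must locate the standard basis vectors and hence decode $C$. However, the proposal has a genuine gap: the construction of the ``disguised'' vectors $b_{n,s}$ is never given, and you yourself flag the calibration of that disguise as ``the heart of the proof.'' That calibration is exactly the nontrivial content. The paper resolves it with a single perturbed generator: setting $\gamma = \sum_{k \in C} 2^{-k}$, it takes $f_0 = (1-\gamma)^{1/p} e_0 + \sum_{n} 2^{-c_n/p} e_{n+1}$ and $f_{n+1} = e_{n+1}$. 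The point is that $\norm{f_0} = 1$ exactly, so in $\norm{\alpha_0 f_0 + \cdots + \alpha_M f_M}^p = |\alpha_0|^p + \mathcal{E}_1 + \cdots + \mathcal{E}_M$ the incomputable contribution of $\gamma$ cancels and each $\mathcal{E}_j$ involves only finitely many enumerated elements of $C$; norms are therefore computable without any oracle, while recovering $e_0$ from $F$ forces one to compute $(1-\gamma)^{-1/p}$ and hence $C$. Without an analogous cancellation mechanism for your $b_{n,s}$, there is no reason the norm function of your $\mathcal{B}$ is computable, and the theorem is not proved.

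A second, more localized error: you assert that disjoint-supportedness is ``effectively recognizable from the norm'' via the equality $\norm{x+y}^p + \norm{x-y}^p = 2(\norm{x}^p + \norm{y}^p)$. Equality of two computable reals is a $\Pi^0_1$ condition --- refutable but not verifiable --- so no oracle-free procedure can confirm that a given rational combination is an atom, and your plan to ``test which rational combinations are atoms'' does not run. The paper's necessity argument avoids this entirely: given an $X$-computable onto isometry, the classification theorem guarantees (non-uniformly) that some $T(e_n)$ equals $\lambda e_0$ with $|\lambda| = 1$, and then a direct estimate shows any $X$-computable $F$-approximation to $\lambda e_0$ yields $X$-computable approximations to $(1-\gamma)^{-1/p}$, whence $X \geq_T C$. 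You should replace your recognition step with an argument of this form.
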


These results also hold for $\ell^p$-spaces over the reals.  In a forthcoming paper it will be shown that $\ell^p$ is $\Delta_2^0$-categorical.

The paper is organized as follows.  Section \ref{sec:background} covers background and motivation.  Section \ref{sec:proof} presents the proof of Theorem \ref{thm:main.2}.  Concluding remarks are presented in Section \ref{sec:conclusion}.

\section{Background}\label{sec:background}

\subsection{Background from functional analysis}

Fix $p$ so that  $1 \leq p < \infty$.  A \emph{generating set} for $\ell^p$ is a subset of $\ell^p$ with the property that $\ell^p$ is the closure of its linear span.

Let $e_n$ be the vector in $\ell^p$ whose $(n+1)$st component is $1$ and whose other components are $0$.  Let $E = \{e_n\ :\ n \in \N\}$.  We call $E$ the \emph{standard generating set} for $\ell^p$.
 
Recall that an \emph{isometry} of $\ell^p$ is a norm-preserving map of $\ell^p$ into $\ell^p$.
We will use the following classification of the surjective linear isometries of $\ell^p$.

\begin{theorem}[Banach/Lamperti]\label{thm:classification}
Suppose $p$ is a real number so that $p \geq 1$ and $p \neq 2$.  Let $T$ be a linear map of $\ell^p$ into $\ell^p$.  Then, the following are equivalent.
\begin{enumerate}
	\item $T$ is a surjective isometry.
	
	\item There is a permutation of $\N$, $\phi$, and a sequence of unimodular points, $\{\lambda_n\}_n$, so that $T(e_n) = \lambda_n e_{\phi(n)}$ for all $n$.
	
	\item Each $T(e_n)$ is a unit vector and the supports of $T(e_n)$ and $T(e_m)$ are disjoint whenever $m \neq n$. 
\end{enumerate}
\end{theorem}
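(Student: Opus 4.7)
The plan is to establish the cycle (2) $\Rightarrow$ (1) $\Rightarrow$ (3) $\Rightarrow$ (2). The first implication is a direct computation: if $T(e_n) = \lambda_n e_{\phi(n)}$ with $\phi$ a permutation of $\N$ and $|\lambda_n|=1$, then $T\bigl(\sum_n a_n e_n\bigr) = \sum_n a_n \lambda_n e_{\phi(n)}$ has $p$-norm $\bigl(\sum_n |a_n|^p\bigr)^{1/p}$, and surjectivity follows from surjectivity of $\phi$.

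The substantive implication (1) $\Rightarrow$ (3) rests on the following characterization of disjoint support in $\ell^p$, which is the technical heart of Lamperti's theorem: for $p \neq 2$, vectors $f, g \in \ell^p$ have disjoint support if and only if
\[
\norm{f+g}^p + \norm{f-g}^p = 2\bigl(\norm{f}^p + \norm{g}^p\bigr).
\]
I would establish this coordinatewise via the pointwise fact that for complex $a, b$, the quantity $|a+b|^p + |a-b|^p - 2(|a|^p + |b|^p)$ is nonnegative when $p > 2$, nonpositive when $1 \leq p < 2$, and vanishes in either case exactly when $ab = 0$. This pointwise claim follows by combining the parallelogram identity $|a+b|^2 + |a-b|^2 = 2(|a|^2 + |b|^2)$ with strict convexity (for $p > 2$) or concavity (for $1 \leq p < 2$) of $s \mapsto s^{p/2}$ on $[0,\infty)$ and the corresponding strict super- or sub-additivity of that map. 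Since every coordinate contributes with the same sign, summation preserves both the inequality and its equality case. The norm identity is visibly preserved by any isometry; applying it to $f = e_n$, $g = e_m$ with $n \neq m$ shows that $T(e_n)$ and $T(e_m)$ have disjoint supports, while $\norm{T(e_n)} = 1$ is immediate.

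For (3) $\Rightarrow$ (2), the disjointness and unit-norm conditions force $\norm{T(x)}^p = \sum_n |a_n|^p = \norm{x}^p$ on the dense span of $E$, so $T$ is an isometry on all of $\ell^p$, and its closed range is the closed span of $\{T(e_n)\}$. If some $T(e_n)$ had support containing two distinct indices $i, j$, then by the disjointness every element of $\ran T$ would exhibit a fixed nonzero ratio between its $i$-th and $j$-th coordinates, precluding $e_i \in \ran T$ and contradicting surjectivity; hence each $T(e_n) = \lambda_n e_{\phi(n)}$ with $|\lambda_n|=1$, and $\phi$ is a bijection of $\N$ by disjointness and surjectivity. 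The main obstacle is the pointwise inequality with its sharp equality case, which is precisely where the hypothesis $p \neq 2$ is essential: at $p = 2$ the parallelogram law makes the norm equation hold for all pairs, and Hilbert-space isometries (unitaries) need not preserve disjoint supports.
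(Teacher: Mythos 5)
First, a point of comparison: the paper does not prove this theorem at all --- it is quoted from the literature (Lamperti's 1958 paper, via Theorem 3.2.5 of Fleming--Jamison) --- so any self-contained argument is necessarily a different route from the paper's. Your route is in fact Lamperti's own: characterizing disjoint supports in $\ell^p$ ($p \neq 2$) by the equality case of the one-sided inequality between $\norm{f+g}^p + \norm{f-g}^p$ and $2(\norm{f}^p + \norm{g}^p)$ is exactly his key lemma, and your two-step derivation of the pointwise statement (strict convexity or concavity of $t \mapsto t^{p/2}$ applied once as midpoint convexity and once as strict super/subadditivity through $0$, played against the parallelogram identity) is correct, including the equality case $ab = 0$. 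The implications (2) $\Rightarrow$ (1) and (1) $\Rightarrow$ (3) are sound, and (1) $\Rightarrow$ (3) rightly needs no surjectivity.

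The gap is in (3) $\Rightarrow$ (2). Condition (3) as printed says nothing about surjectivity, yet your argument that each $T(e_n)$ has singleton support, and that $\phi$ is onto, appeals to ``contradicting surjectivity'' --- a hypothesis not available at that point in your cycle, and one that cannot be recovered: the shift $T(e_n) = e_{n+1}$ satisfies (3) verbatim but neither (1) nor (2), and the map with $T(e_0) = 2^{-1/p}(e_0 + e_1)$ and $T(e_n) = e_{n+1}$ for $n \geq 1$ shows that singleton supports can genuinely fail under (3) alone. So the theorem as stated is false in the direction (3) $\Rightarrow$ (2); condition (3) must be augmented by the requirement that $\{T(e_n)\}_n$ is a generating set (equivalently, that $T$ is onto), exactly as the paper itself does in condition (3) of its Proposition \ref{prop:comp.sli}. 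Your proof should make this repair explicit rather than import surjectivity silently. A smaller, shared quibble: since $T$ is only assumed linear, both (2) $\Rightarrow$ (1) (passing $T$ through an infinite sum) and the extension of the isometry from the span of $E$ to all of $\ell^p$ in (3) $\Rightarrow$ (2) tacitly require $T$ to be bounded, whereas (2) and (3) only constrain $T$ on the span of $E$; this should at least be acknowledged.
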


In his seminal text on linear operators, S. Banach stated Theorem \ref{thm:classification} for the case of $\ell^p$ spaces over the reals \cite{Banach.1987}.  He also stated a classification of the linear isometries of $L^p[0,1]$ in the real case.  Banach's proofs of these results were sketchy and did not easily generalize to the complex case.  In 1958, J. Lamperti rigorously proved a generalization of  Banach's claims to real and complex $L^p$-spaces of $\sigma$-finite measure spaces \cite{Lamperti.1958}.  Theorem \ref{thm:classification} follows from J.  Lamperti's work as it appears in Theorem 3.2.5 of \cite{Fleming.Jamison.2003}.   Note that Theorem \ref{thm:classification} fails when $p = 2$.  For, $\ell^2$ is a Hilbert space.  So, if $\{f_0, f_1, \ldots\}$ is any orthonormal basis for $\ell^2$, then there is a unique surjective linear isometry of $\ell^2$, $T$, so that $T(e_n) = f_n$ for all $n$.

\subsection{Background from computable analysis}

We assume the reader is familiar with the fundamental notions of computability theory as covered in \cite{Cooper.2004}.  

Suppose $z_0 \in \C$.  We say that $z_0$ is \emph{computable} if there is an algorithm that given any $k \in \N$ as input computes a rational point $q$ so that $|q - z_0| < 2^{-k}$.  This is equivalent to saying that the real and imaginary parts of $z_0$ have computable decimal expansions.

Our approach to computability on $\ell^p$ is equivalent to the format in \cite{Pour-El.Richards.1989} wherein a more expansive treatment may be found.

Fix a computable real $p$ so that $1 \leq p < \infty$.  Let $F = \{f_0, f_1, \ldots\}$ be a generating set for $\ell^p$.  We say that $F$ is an \emph{effective generating set} if there is an algorithm that given any rational points $\alpha_0, \ldots, \alpha_M$ and a nonnegative integer $k$ as input computes a rational number $q$ so that 
\[
q - 2^{-k} < \norm{\sum_{j = 0}^M \alpha_j f_j}< q + 2^{-k}.
\]
That is, the map 
\[
\alpha_0, \ldots, \alpha_M \mapsto \norm{\sum_{j = 0}^M \alpha_j f_j}
\]
is computable. Clearly the standard generating set is an effective generating set.  

Suppose $F = \{f_0, f_1, \ldots\}$ is an effective generating set for $\ell^p$.  We say that a vector $g \in \ell^p$ is \emph{computable with respect to $F$} if there is an algorithm that given any nonnegative integer $k$ as input computes rational points $\alpha_0, \ldots, \alpha_M$ so that 
\[
\norm{g - \sum_{j = 0}^M \alpha_j f_j} < 2^{-k}.
\]
Suppose $g_n \in \ell^p$ for all $n$.  We say that $\{g_n\}_n$, is \emph{computable with respect to $F$} if there is an algorithm that given any $k,n \in \N$ as input computes rational points $\alpha_0, \ldots, \alpha_M$ so that 
\[
\norm{g_n - \sum_{j = 0}^M \alpha_j f_j} < 2^{-k}.
\]

When $f \in \ell^p$ and $r > 0$, let $B(f ; r)$ denote the open ball with center $f$ and radius $r$.  When $\alpha_0, \ldots, \alpha_M$ are rational points and $r$ is a positive rational number, we call $B\left(\sum_{j = 0}^M \alpha_j f_j; r \right)$ a \emph{rational ball}.  

Suppose $F = \{f_0, f_1, \ldots\}$ and $G = \{g_0, g_1, \ldots\}$ are effective generating sets for $\ell^p$.  We say that a map $T: \ell^p \rightarrow \ell^p$ is \emph{computable with respect to $(F,G)$} if there is an algorithm $P$ that meets the following three criteria.
\begin{itemize}
	\item \bf Approximation:\rm\ Given a rational ball $B(\sum_{j = 0}^M \alpha_j f_j ; r)$ as input, $P$ either does not halt or produces a rational ball $B(\sum_{j = 0}^N \beta_j g_j; r')$.
	
	\item \bf Correctness:\rm\ If $B(\sum_{j = 0}^N \beta_j g_j ; r')$ is the output of $P$ on input\\ $B(\sum_{j = 0}^M \alpha_j f_j; r)$, then $T(f) \in B(\sum_{j = 0}^N \beta_j g_j; r')$ whenever \\
	$f \in B(\sum_{j = 0}^M \alpha_j f_j; r)$.
	
	\item \bf Convergence:\rm\ If $U$ is a neighborhood of $T(f)$, then $f$ belongs to a rational ball $B_1 = B(\sum_{j = 0}^M \alpha_j f_j; r)$ so that $P$ halts on $B_1$ and produces a rational ball that is included in $U$.
\end{itemize}

When we speak of an algorithm accepting a rational ball $B(\sum_{j = 0}^M \alpha_j f_j; r)$ as input, we of course mean that it accepts some representation of the ball such as a code of the sequence $(r, M, \alpha_0, \ldots, \alpha_M)$.  

All of these definitions have natural relativizations.  For example, if \\
$F = \{f_0, f_1, \ldots \}$ is an effective generating set, then we say that $X$ computes a vector $g \in \ell^p$ with respect to $F$ if there is a Turing reduction that given the oracle $X$ and an input $k$ computes rational points $\alpha_0, \ldots, \alpha_M$ so that $\norm{g - \sum_{j = 0}^M \alpha_j f_j} < 2^{-k}$.

\subsection{Background from computable categoricity} 

For the sake of motivation, we begin by considering the following simple example.  Let $\zeta$ be an incomputable unimodular point in the plane.  For each $n$, let $f_n = \zeta e_n$.  Let $F = \{f_0, f_1, \ldots\}$.  Thus, $F$ is an effective generating set.  However, the vector $\zeta e_0$ is computable with respect to $F$ even though it is not computable with respect to the standard generating set $E$.  In fact, the only vector that is computable with respect to $E$ and $F$ is the zero vector.  The moral of the story is that different effective generating sets may yield very different classes of computable vectors and sequences.  However, there is a surjective linear isometry of $\ell^p$ that is computable with respect to $(E,F)$; namely multiplication by $\zeta$.  Thus, $E$ and $F$ give the same computability theory on $\ell^p$ even though they yield very different classes of computable vectors.  This leads to the following definition.

\begin{definition}\label{def:lpcompcat}
Suppose $p$ is a computable real so that $p \geq 1$.  We say that $\ell^p$ is \emph{computably categorical} if for every effective generating set $F$ there is a surjective linear isometry of $\ell^p$ that is computable with respect to $(E,F)$.
\end{definition}

The definitions just given for $\ell^p$ can easily be adapted to any separable Banach space.  Suppose $G = \{g_0, g_1\ldots, \}$ is an effective generating set for a Banach space $\mathcal{B}$.  The pair $(\mathcal{B}, G)$ is called a \emph{computable Banach space}.  Suppose that $\mathcal{B}$ is linearly isometric to $\ell^p$, and let $T$ denote a linear isometric mapping of $\mathcal{B}$ onto $\ell^p$.  Let $f_n = T(g_n)$, and let $F = \{f_0, f_1, \ldots\}$.  Then, $F$ is an effective generating set for $\ell^p$, and $T$ is computable with respect to $(G, F)$.  Thus, Theorem \ref{thm:main.2} can be rephrased as follows.

\begin{theorem}\label{thm:main.2'}
Suppose $p$ is a computable real so that $p \geq 1$ and $p \neq 2$.  Suppose $C$ is a \emph{c.e.} set.  Then, there is an effective generating set for $\ell^p$, $F$, so that with respect to $(E,F)$, $C$ computes a surjective linear isometry of $\ell^p$ .  Furthermore, any oracle that computes a surjective linear isometry of $\ell^p$ with respect to $(E,F)$ must also compute $C$.
\end{theorem}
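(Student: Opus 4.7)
My plan is to construct an effective generating set $F$ whose structure encodes the enumeration of $C$ in a rigid way, so that any isometry of $\ell^p$ computable with respect to $(E,F)$ must implicitly decode $C$. Using a computable enumeration of $C$, I would define $F$ via a $C$-computable partition of $\mathbb{N}$ into pairs (or larger blocks) $\{(a_n, b_n)\}_n$, setting $f_{2n} = 2^{-1/p}(e_{a_n}+e_{b_n})$ and $f_{2n+1} = 2^{-1/p}(e_{a_n}-e_{b_n})$. The key algebraic identity $\|\sum_j \alpha_j f_j\|^p = \tfrac{1}{2}\sum_n (|\alpha_{2n}+\alpha_{2n+1}|^p + |\alpha_{2n}-\alpha_{2n+1}|^p)$ depends only on the rational coefficients $\alpha_j$ and not on the partition, so $F$ is effective regardless of $C$. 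If this simple pair construction turns out to admit a ``trivial'' computable isometry that bypasses $C$ (as it threatens to, via a block-wise Hadamard inversion), the blocks will need to be enlarged, or the assignment of basis vectors to blocks will need to be coordinated more subtly with the stagewise enumeration of $C$, to preclude such a shortcut.

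For the forward direction, the oracle $C$ computes the partition $(a_n, b_n)$ and therefore computes the linear isometry $T$ defined by $T(e_{2n}) = e_{a_n}$ and $T(e_{2n+1}) = e_{b_n}$; the $(E,F)$-algorithm translates an input $E$-ball into an $F$-ball by block-wise solving the Hadamard linear system relating $E$- and $F$-coefficients.

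For the reverse direction, suppose an oracle $X$ computes a surjective linear isometry $T$ with respect to $(E,F)$. By Theorem \ref{thm:classification}, $T(e_n) = \lambda_n e_{\pi(n)}$ for some permutation $\pi$ and unimodular $\lambda_n$. From $X$'s algorithm I would extract, for each $n$, a sequence of rational $F$-combinations converging to $T(e_n)$; the disjoint-block structure of $F$ lets me read off the $F$-block that contains $\pi(n)$, while the internal sign pattern tells me whether $\pi(n)$ is the ``$a$''-element or the ``$b$''-element of that block. The main obstacle is \emph{rigidity}: the construction must be arranged so that this block-and-sign data, across all $n$ and for \emph{any} permutation $\pi$ realized by the computed $T$, necessarily encodes $C$---in particular, ruling out the existence of some permutation $\pi$ that would make the extracted data computable independently of $C$. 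Establishing this robustness against arbitrary $\pi$, by the right stagewise coordination between the construction of $F$ and the enumeration of $C$, is the heart of the proof.
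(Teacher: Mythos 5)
Your core construction fails, and the failure is not a technicality that can be deferred. For the pairing $f_{2n}=2^{-1/p}(e_{a_n}+e_{b_n})$, $f_{2n+1}=2^{-1/p}(e_{a_n}-e_{b_n})$ one has exactly $e_{a_n}=2^{1/p-1}(f_{2n}+f_{2n+1})$ and $e_{b_n}=2^{1/p-1}(f_{2n}-f_{2n+1})$, so the sequence $g_{2n}=e_{a_n}$, $g_{2n+1}=e_{b_n}$ is computable with respect to $F$ uniformly in $n$ \emph{with no oracle}: you never need to know the indices $a_n,b_n$, only the $F$-coefficients, which are the fixed computable number $2^{1/p-1}$. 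These $g_n$ are unit vectors with pairwise disjoint supports whose span is dense, so by Proposition \ref{prop:comp.sli} there is a surjective linear isometry computable with respect to $(E,F)$ outright, and the second assertion of the theorem is false for your $F$ whenever $C$ is incomputable. This is exactly the ``Hadamard inversion'' you flagged, but the repair you gesture at does not exist within this family: for any construction in which each $f_j$ is a finite rational (or computable-coefficient) combination of $e_i$'s organized into finite blocks, inverting the finite linear system inside each block yields the same shortcut, no matter how cleverly the blocks are coordinated with the stagewise enumeration of $C$. Rigidity cannot come from \emph{which} basis vectors are grouped together, because by Theorem \ref{thm:classification} a surjective isometry is determined by an arbitrary matching of disjointly supported unit vectors; the identity of the underlying indices is invisible to it.

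The incomputable information has to be hidden where an isometry cannot route around it: in an incomputable \emph{coefficient} attached to a vector of infinite support. The paper sets $\gamma=\sum_{k\in C}2^{-k}$ and $f_0=(1-\gamma)^{1/p}e_0+\sum_{n}2^{-c_n/p}e_{n+1}$ with $f_{n+1}=e_{n+1}$, arranged so that $\norm{f_0}=1$ exactly and all cross terms in $\norm{\sum_j\alpha_jf_j}^p$ are computable from the enumeration of $C$ alone; note that the effectiveness of the generating set, which your construction gets for free, is precisely the delicate point here. Then any $F$-approximation of a unimodular multiple of $e_0$ must carry an $f_0$-coefficient converging to $(1-\gamma)^{-1/p}$ in modulus, hence computes $\gamma$ and so $C$, and Lamperti's classification guarantees that every surjective linear isometry sends some $e_m$ to such a multiple of $e_0$. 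Your reverse direction, by contrast, only proposes to extract ``block and sign data'' and never isolates a quantity whose computation is equivalent to deciding membership in $C$; that missing identification is the entire content of the theorem.
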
 

A.G. Melnikov and K.M. Ng have investigated computable categoricity questions with regards to the space $C[0,1]$ of continuous functions on the unit interval with the supremum norm \cite{Melnikov.2013}, \cite{Melnikov.Ng.2014}.  The study of computable categoricity for countable structures goes back at least as far as the work of Goncharov \cite{Goncharov.1975}.  The text of Ash and Knight has a thorough discussion of the main results of this line of inquiry \cite{Ash.Knight.2000}.  The survey by Harizanov covers other directions in the countable computable structures program \cite{Harizanov.1998}.

\section{Proof of Theorems \ref{thm:main.1} and \ref{thm:main.2}}\label{sec:proof}

We begin by noting the following easy consequence of the definitions and Theorem \ref{thm:classification}.

\begin{proposition}\label{prop:comp.sli}
Suppose $p$ is a computable real so that $p \geq 1$ and so that $p \neq 2$.  Let $F$ be an effective generating set for $\ell^p$.  Then, the following are equivalent.
\begin{enumerate}
	\item There is a surjective linear isometry of $\ell^p$ that is computable with respect to $(E,F)$.  \label{prop:comp.sli::itm.1}
	
	\item There is a permutation of $\N$, $\phi$, and a sequence of unimodular points $\{\lambda_n\}_n$, so that $\{\lambda_n e_{\phi(n)}\}_n$ is computable with respect to $F$. \label{prop:comp.sli::itm.2}
	
	\item There is a sequence of unit vectors $\{g_n\}_n$ so that $\{g_n\}_n$ is computable with respect to $F$, $G = \{g_0, g_1, \ldots\}$ is a generating set for $\ell^p$, and so that the supports of $g_n$ and $g_m$ are disjoint whenever $n \neq m$.  \label{prop:comp.sli::itm.3}
\end{enumerate}
\end{proposition}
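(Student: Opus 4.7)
The plan is to prove the cycle $(1) \Rightarrow (2) \Rightarrow (3) \Rightarrow (1)$, leveraging Theorem \ref{thm:classification} at both ends of the cycle. The content is essentially that ``effective data for a surjective linear isometry'' coincides with ``effective data for a permutation-with-phases of the standard basis'', and Theorem \ref{thm:classification} is the non-effective version of exactly that statement.

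For $(1) \Rightarrow (2)$, given a surjective linear isometry $T$ computable with respect to $(E,F)$, I would observe that $\{e_n\}_n$ is trivially uniformly computable with respect to $E$, and then use the convergence clause in the definition of $(E,F)$-computability to compute each $T(e_n)$ to arbitrary precision in the $F$-basis uniformly in $n$: enumerate rational balls containing $e_n$ (a semi-decidable condition, since norms of rational combinations of $E$-vectors are computable), run the algorithm for $T$ on each in parallel, and return the center of the first output ball of radius less than $2^{-k-1}$. Theorem \ref{thm:classification} then yields $T(e_n) = \lambda_n e_{\phi(n)}$ for some permutation $\phi$ of $\N$ and unimodular sequence $\{\lambda_n\}_n$. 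For $(2) \Rightarrow (3)$, set $g_n := \lambda_n e_{\phi(n)}$: each $g_n$ is a unit vector; the supports are the pairwise distinct singletons $\{\phi(n)\}$; and $G$ generates $\ell^p$ since its linear span contains $e_m = \overline{\lambda_{\phi^{-1}(m)}} g_{\phi^{-1}(m)}$ for every $m \in \N$.

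The substantive direction is $(3) \Rightarrow (1)$. Define $T$ by $T(e_n) := g_n$ and extend linearly and continuously (which is possible since disjointness of supports and unit-norm of the $g_n$ make the extension isometric on the dense linear span of $E$). By Theorem \ref{thm:classification} $(3) \Rightarrow (1)$, $T$ is a surjective linear isometry. To see $T$ is $(E,F)$-computable, I would describe an algorithm that, on input $B(\sum_{j \leq M} \alpha_j e_j; r)$, uses the computability of $\{g_n\}_n$ with respect to $F$ to search for rational $\beta_0, \ldots, \beta_N$ satisfying $\norm{\sum_{j} \alpha_j g_j - \sum_{j} \beta_j f_j} < r$ (this inequality is semi-decidable since the left-hand side is a computable real), and then outputs the rational ball $B(\sum_{j} \beta_j f_j; 2r)$. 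Correctness is a short triangle-inequality estimate using $T(\sum_j \alpha_j e_j) = \sum_j \alpha_j g_j$ and the isometry property of $T$. For convergence, given a neighborhood $B(T(f);\epsilon)$ of $T(f)$, I would pick rational $\alpha_j$ and rational $r$ with $r$ on the order of $\epsilon/4$ and $\norm{f - \sum_j \alpha_j e_j} < r$, and then a second triangle-inequality shows the output ball sits inside $B(T(f);\epsilon)$.

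The main (mild) obstacle is this convergence verification in $(3) \Rightarrow (1)$: one has to chase a couple of triangle inequalities and pick the relationship between the input radius and the output radius carefully. Everything else is direct bookkeeping on top of the Banach/Lamperti classification and of unpacking the definition of $(E,F)$-computability.
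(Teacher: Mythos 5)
Your proposal is correct and follows essentially the same route as the paper: the key direction $(3)\Rightarrow(1)$ is handled identically (define $T(e_n)=g_n$, extend isometrically, and output $B(\sum_j\beta_j f_j;2r)$ after searching for $\beta_j$ with $\norm{\sum_j\alpha_j g_j-\sum_j\beta_j f_j}<r$), and the remaining implications come from Theorem \ref{thm:classification} plus routine unpacking of the definitions. The only cosmetic difference is that the paper observes $(2)$ and $(3)$ simply restate each other rather than running a cycle, and it leaves implicit the uniform computation of $\{T(e_n)\}_n$ with respect to $F$ that you spell out in $(1)\Rightarrow(2)$.
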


\begin{proof}
Parts (\ref{prop:comp.sli::itm.2}) and (\ref{prop:comp.sli::itm.3}) just restate each other.  It follows from Theorem \ref{thm:classification} that (\ref{prop:comp.sli::itm.1}) implies (\ref{prop:comp.sli::itm.2}).  

Suppose (\ref{prop:comp.sli::itm.3}) holds.  Let $T$ be the unique linear map of the span of $E$ onto the span of $G$ so that $T(e_n) = g_n$ for all $n$.  Since the supports of $g_0, g_1, \ldots$ are pairwise disjoint, and since each $g_n$ is a unit vector, $T$ is isometric.  It follows that there is a unique extension of $T$ to a unique linear isometry of $\ell^p$; denote this extension by $T$ as well.  
We claim that $T$ is computable with respect to $(E,F)$.   For, suppose a rational ball $B(\sum_{j=0}^M \alpha_j e_j; r)$ is given as input.  Since $\{g_n\}_n$ is computable with respect to $F$, it follows that we can compute a non-negative integer $N$ and rational points $\beta_0, \ldots, \beta_N$ so that $\norm{\sum_{j = 0}^M \alpha_j g_j - \sum_{j = 0}^N \beta_j f_j} < r$.  We then output $B(\sum_{j = 0}^N \beta_j g_j; 2r)$.  It follows that the Approximation, Correctness, and Convergence criteria are satisfied and so $T$ is computable with respect to $(E, F)$. \
\end{proof}

We now turn to the proof of Theorem \ref{thm:main.2'} which, as we have noted, implies Theorem \ref{thm:main.2}.  Our construction of $F$ is a modification of the construction used by Pour-El and Richards to show that $\ell^1$ is not computably categorical \cite{Pour-El.Richards.1989}.
Let $C$ be an incomputable c.e. set.  Without loss of generality, we assume $0 \not \in C$.  Let $\{c_n\}_{n \in \N}$ be an effective enumeration of $C$.  Set
\[
\gamma = \sum_{k \in C} 2^{-k}.
\]
Thus, $0 < \gamma < 1$, and $\gamma$ is an incomputable real.  Set:
\begin{eqnarray*}
f_0 & = & (1 - \gamma)^{1/p} e_0 + \sum_{n = 0}^\infty 2^{- c_n / p} e_{n + 1}\\
f_{n + 1} & = & e_{n + 1}\\
F & = & \{f_0, f_1, f_2, \ldots \}
\end{eqnarray*}
Since $1 - \gamma > 0$, we can use the standard branch of $\sqrt[p]{\ }$.  

We divide the rest of the proof into the following lemmas.

\begin{lemma}\label{lm:EGS}
$F$ is an effective generating set.
\end{lemma}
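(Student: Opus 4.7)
My approach is to verify both defining properties of an effective generating set---that $F$ generates $\ell^p$ and that the norm on finite rational combinations from $F$ is uniformly computable---separately.

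For the generation claim, since $f_{n+1} = e_{n+1}$ for every $n \geq 0$, each $e_k$ with $k \geq 1$ already lies in the linear span of $F$. For $e_0$, the tail $\sum_{n \geq N} 2^{-c_n/p} e_{n+1}$ has $p$-norm $\bigl(\sum_{n \geq N} 2^{-c_n}\bigr)^{1/p} \to 0$, so $f_0 - \sum_{n=0}^{N-1} 2^{-c_n/p} e_{n+1}$ converges in $\ell^p$ to $(1-\gamma)^{1/p} e_0$. Since $(1 - \gamma)^{1/p} > 0$, this places $e_0$ in the closed linear span of $F$, and hence $F$ generates $\ell^p$.

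For the norm computation, I expand coordinate-by-coordinate. The coefficient of $e_0$ in $\sum_{j=0}^M \alpha_j f_j$ is $\alpha_0 (1-\gamma)^{1/p}$, and for $k \geq 1$ the coefficient of $e_k$ is $\alpha_0 2^{-c_{k-1}/p}$ together with an additional $\alpha_k$ when $k \leq M$. Taking $p$-th powers,
\[
\norm{\sum_{j=0}^M \alpha_j f_j}^p = |\alpha_0|^p (1-\gamma) + \sum_{k=1}^M \bigl| \alpha_0 2^{-c_{k-1}/p} + \alpha_k \bigr|^p + |\alpha_0|^p \sum_{n = M}^{\infty} 2^{-c_n}.
\]
The key observation is the cancellation $(1-\gamma) + \sum_{n \geq M} 2^{-c_n} = 1 - \sum_{n=0}^{M-1} 2^{-c_n}$, which is an \emph{exact rational} computable from the first $M$ values produced by the effective enumeration of $C$. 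The expression therefore reduces to
\[
|\alpha_0|^p \Bigl( 1 - \sum_{n=0}^{M-1} 2^{-c_n}\Bigr) + \sum_{k=1}^M \bigl| \alpha_0 2^{-c_{k-1}/p} + \alpha_k \bigr|^p,
\]
which is uniformly computable in $M$ and the $\alpha_j$: each $2^{-c_{k-1}/p}$ is computable because $p$ is computable, and taking $p$-th roots preserves computability.

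The only genuinely substantive point is this cancellation identity. While neither $1-\gamma$ nor $\sum_{n \geq M} 2^{-c_n}$ is computable on its own (both implicitly encode the whole set $C$), their sum is a rational determined by the first $M$ enumerated elements of $C$. This is precisely what makes the construction work and will be reused in the later lemmas to control how much of $C$ can be extracted from a finite piece of data.
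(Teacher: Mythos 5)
Your proof is correct and follows essentially the same route as the paper: the paper packages the same coordinate expansion via $\norm{f_0}^p = 1$ and the correction terms $\mathcal{E}_j = |\alpha_0 2^{-c_{j-1}/p} + \alpha_j|^p - |\alpha_0|^p 2^{-c_{j-1}}$, which upon expansion is exactly your cancellation identity $(1-\gamma) + \sum_{n \geq M} 2^{-c_n} = 1 - \sum_{n=0}^{M-1} 2^{-c_n}$. Your identification of this cancellation as the crux of the computability argument matches the paper's reasoning precisely.
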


\begin{proof}
Since 
\[
(1 - \gamma)^{1/p} e_0 = f_0 - \sum_{n =1}^\infty 2^{-c_{n-1} / p} f_n
\]
the closed linear span of $F$ includes $E$.  Thus, $F$ is a generating set for $\ell^p$.  Note that $\norm{f_0} = 1$.

Suppose $\alpha_0, \ldots, \alpha_M$ are rational points.  When $1 \leq j \leq M$, set
\[
\mathcal{E}_j = |\alpha_0 2^{-c_{j-1} / p} + \alpha_j |^p - |\alpha_0|^p 2^{-c_{j-1}}.
\]
It follows that 
\begin{eqnarray*}
\norm{\alpha_0 f_0 + \ldots +\alpha_M f_M}^p & = & |\alpha_0|^p \norm{f_0}^p + \mathcal{E}_1 + \ldots + \mathcal{E}_M\\
& = & |\alpha_0|^p + \mathcal{E}_1 + \ldots + \mathcal{E}_M.
\end{eqnarray*}
Since $\mathcal{E}_1$, $\ldots$, $\mathcal{E}_M$ can be computed from $\alpha_0, \ldots, \alpha_M$, $\norm{\alpha_0 f_0 + \ldots + \alpha_M f_M}$ can be computed from $\alpha_0, \ldots, \alpha_M$.  Thus, $F$ is an effective generating set.
\end{proof}

\begin{lemma}\label{lm:X.computes.C.0}
Every oracle that with respect to $F$ computes a
scalar multiple of $e_0$ whose norm is $1$ must also compute $C$.
\end{lemma}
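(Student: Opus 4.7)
The plan is to reduce the claim to showing that $X$ computes the real number $\gamma$. Once this is done, $X$ computes $C$ by a standard argument: using the effective enumeration $\{c_n\}$ of $C$, form $\gamma_s = \sum_{n < s} 2^{-c_n}$, so that $\gamma_s \nearrow \gamma$ computably; then from any $X$-computable rational approximation to $\gamma$ of precision $2^{-(k+1)}$ one can effectively find a stage $s$ witnessing $\gamma - \gamma_s < 2^{-k}$, after which $k$ cannot be enumerated into $C$ and so membership of $k$ in $C$ is decided.

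Now suppose that, with respect to $F$, $X$ computes $g = \lambda e_0$ with $|\lambda| = 1$. Then $X$ produces rational points $\alpha_{0,k}, \ldots, \alpha_{M_k, k}$ with $\norm{\lambda e_0 - \sum_{j=0}^{M_k} \alpha_{j,k} f_j} < 2^{-k}$ for each $k$. Expanding in the standard basis via $f_0 = (1-\gamma)^{1/p} e_0 + \sum_{n \geq 0} 2^{-c_n/p} e_{n+1}$ and $f_{n+1} = e_{n+1}$, only $f_0$ contributes to the $e_0$-coordinate, so the $e_0$-coordinate of $\sum_j \alpha_{j,k} f_j$ equals $\alpha_{0,k}(1-\gamma)^{1/p}$. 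Since every coordinate of a vector $v \in \ell^p$ is bounded in modulus by $\norm{v}$, this gives $|\alpha_{0,k}(1-\gamma)^{1/p} - \lambda| < 2^{-k}$.

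Taking absolute values and using $|\lambda| = 1$ yields $\bigl| \, |\alpha_{0,k}|(1-\gamma)^{1/p} - 1 \, \bigr| < 2^{-k}$. For $k \geq 2$ this forces $|\alpha_{0,k}| \geq 3/4$, so $1/|\alpha_{0,k}|$ is well-defined, and dividing through gives $\bigl| \, 1/|\alpha_{0,k}| - (1-\gamma)^{1/p} \, \bigr| < 2^{-k+2}$. Hence $\{1/|\alpha_{0,k}|\}_{k \geq 2}$ is an $X$-computable sequence converging to $(1-\gamma)^{1/p}$ at a computable rate, so $X$ computes $(1-\gamma)^{1/p}$. Raising to the $p$-th power (using computability of $p$) then yields $1-\gamma$, hence $\gamma$, and hence $C$ by the first paragraph.

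The crucial observation making this work is that $e_0$ appears in exactly one element of $F$, namely $f_0$, with the irrational coefficient $(1-\gamma)^{1/p}$; this isolates $(1-\gamma)^{1/p}$ in the $e_0$-coordinate and prevents the other $f_j$ from disturbing it. I do not anticipate a serious obstacle; the only mildly delicate point is establishing the lower bound on $|\alpha_{0,k}|$ that permits taking its reciprocal.
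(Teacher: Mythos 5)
Your proposal is correct and follows essentially the same route as the paper: both isolate the $e_0$-coordinate of the approximating sum (which is $\alpha_0(1-\gamma)^{1/p}$ since only $f_0$ involves $e_0$), bound it via the norm, apply the reverse triangle inequality, and thereby recover $(1-\gamma)^{\pm 1/p}$ and hence $\gamma$ and $C$. The only differences are cosmetic --- the paper approximates $(1-\gamma)^{-1/p}$ directly by $|\alpha_0|$ using a fixed rational upper bound $q_0$ to control the error, whereas you approximate $(1-\gamma)^{1/p}$ by $1/|\alpha_{0,k}|$, and you spell out the standard left-c.e.\ real argument that the paper leaves implicit.
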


\begin{proof}
Suppose that with respect to $F$, $X$ computes a vector of the form $\lambda e_0$ where $|\lambda| = 1$.  It suffices to show that $X$ computes $(1 - \gamma)^{-1/p}$.  

Fix a rational number $q_0$ so that $(1 - \gamma)^{-1/p} \leq q_0$.  Let $k \in \N$ be given as input.  Compute $k'$ so that $2^{-k'} \leq q_0 2^{-k}$.  Since $X$ computes $\lambda e_0$ with respect to $F$, we can use oracle $X$ to compute rational points $\alpha_0, \ldots, \alpha_M$ so that 
\begin{equation}
\norm{\lambda e_0 - \sum_{j = 0}^M \alpha_j f_j} < 2^{-k'}.\label{inq:1}
\end{equation}
We claim that $|(1 - \gamma)^{-1/p} - |\alpha_0| | < 2^{-k}$.  For, it follows from (\ref{inq:1}) that $|\lambda - \alpha_0 (1 - \gamma)^{1/p}| < 2^{-k'}$.  Thus, $|1 - |\alpha_0| (1 - \gamma)^{1/p}| < 2^{-k'}$.  Hence, 
\[
|(1 - \gamma)^{-1/p} - |\alpha_0|| < 2^{-k'}(1 - \gamma)^{-1/p} \leq 2^{-k'}q_0  \leq 2^{-k}.
\]
Since $X$ computes $\alpha_0$ from $k$, $X$ computes $(1 - \gamma)^{-1/p}$.
\end{proof}

\begin{lemma}\label{lm:X.computes.C.1}
If $X$ computes a surjective linear isometry of $\ell^p$ with respect to $(E,F)$, then $X$ must also compute $C$. 
\end{lemma}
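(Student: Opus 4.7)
The plan is to reduce this lemma to Lemma \ref{lm:X.computes.C.0} by using the oracle $X$ to locate an index $n_0 \in \N$ for which $T(e_{n_0})$ is a unimodular scalar multiple of $e_0$, and then feed the $F$-basis approximations of $T(e_{n_0})$ (which $X$ can produce) into the algorithm from that earlier lemma. By Theorem \ref{thm:classification}, the surjective isometry $T$ computed by $X$ has the form $T(e_n) = \lambda_n e_{\phi(n)}$ for some permutation $\phi$ of $\N$ and unimodular scalars $\lambda_n$, so such an $n_0 = \phi^{-1}(0)$ exists. Feeding rational balls $B(e_n; r)$ of shrinking radius into the algorithm witnessing $T$ would give, uniformly in $n$ and $k$, rational coefficients $\beta_0^{(n,k)}, \ldots, \beta_{M_{n,k}}^{(n,k)}$ with $\norm{T(e_n) - \sum_{j} \beta_j^{(n,k)} f_j} < 2^{-k}$.

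The crux will be extracting from these approximations a decidable test for the predicate ``$\phi(n) = 0$.''  My key observation is that $f_0$ is the only element of $F$ whose $0$-th $E$-coordinate is nonzero, with that coordinate equal to $(1-\gamma)^{1/p}$.  Applying the pointwise inequality $|g(0)| \leq \norm{g}$ to $g = T(e_n) - \sum_j \beta_j^{(n,k)} f_j$ then gives
\[
\left| \beta_0^{(n,k)} (1-\gamma)^{1/p} - T(e_n)(0) \right| < 2^{-k}.
\]
Since $T(e_n)(0)$ equals $\lambda_n$ when $\phi(n) = 0$ and $0$ otherwise, hardcoding a fixed rational $q_0 \geq (1-\gamma)^{-1/p}$ into the reduction (exactly as in the proof of Lemma \ref{lm:X.computes.C.0}) and choosing any $k$ with $2^{-k} q_0 < 1/2$ forces $|\beta_0^{(n,k)}| > 1/2$ in the first case and $|\beta_0^{(n,k)}| < 1/2$ in the second.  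An $X$-oracle unbounded search over $n$ then locates $n_0$, after which Lemma \ref{lm:X.computes.C.0} applied to the $F$-computable vector $T(e_{n_0}) = \lambda_{n_0} e_0$ yields that $X$ computes $C$.

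The main obstacle, as I see it, is not the search itself but isolating this distinguishing invariant; the remainder is essentially bookkeeping.  The invariant works because $f_0$ is the only ``spread out'' vector in $F$, so among all the vectors $\lambda_n e_{\phi(n)}$ only the one with $\phi(n) = 0$ can force the $f_0$-coefficient of the $F$-expansion to be nonzero; for every other $n$, $T(e_n) = \lambda_n f_{\phi(n)}$ is already a single $F$-basis term.  The incomputability of $(1-\gamma)^{-1/p}$ is handled by the same device used in Lemma \ref{lm:X.computes.C.0}: although the reduction cannot compute this constant, it is allowed to carry a rational upper bound $q_0$ as hardcoded data.
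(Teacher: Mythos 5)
Your proposal is correct and takes essentially the same route as the paper, which simply invokes the relativization of Proposition \ref{prop:comp.sli} to conclude that $X$ computes (with respect to $F$) the sequence $\{\lambda_n e_{\phi(n)}\}_n$, hence in particular a norm-one multiple of $e_0$, and then applies Lemma \ref{lm:X.computes.C.0}. Your explicit $X$-oracle search for $n_0 = \phi^{-1}(0)$ is sound but unnecessary, since a Turing reduction may carry the single index $n_0$ as hardcoded non-uniform advice, exactly as it carries the rational bound $q_0$.
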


\begin{proof}
By Lemma \ref{lm:X.computes.C.0} and the relativization of Proposition \ref{prop:comp.sli}.
\end{proof}

\begin{lemma}\label{lm:C.computes.e_0}
With respect to $F$, $C$ computes $e_0$.
\end{lemma}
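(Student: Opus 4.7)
The plan is to invert the definition of $f_0$ to write $e_0$ explicitly as an infinite combination over $F$, and then effectively approximate that combination by finite rational combinations using $C$ as an oracle. Since $f_{n+1} = e_{n+1}$, the defining equation for $f_0$ rearranges to
\[
e_0 \;=\; (1-\gamma)^{-1/p}\left( f_0 - \sum_{n=0}^\infty 2^{-c_n/p}\, f_{n+1}\right).
\]
So the task reduces to two $C$-computable subtasks: computing the scalar $(1-\gamma)^{-1/p}$, and controlling the tail $\sum_{n \geq N} 2^{-c_n/p}\, f_{n+1}$ in $\ell^p$-norm.

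The first subtask is immediate: from $C$ as an oracle we can decide membership in $C$, so $\sum_{k \leq N,\, k \in C} 2^{-k}$ approximates $\gamma$ to within $2^{-N}$. Since $p$ is computable and $1 - \gamma > 0$, this yields a $C$-computable value of $(1-\gamma)^{-1/p}$. For the second, the partial sums $s_N = \sum_{n < N} 2^{-c_n}$ are uniformly computable from the effective enumeration of $C$ and converge to $\gamma$ from below; given any target precision, $C$ can search for $N$ with $\gamma - s_N$ small enough, and this bounds the $\ell^p$-norm of the tail $\sum_{n \geq N} 2^{-c_n/p}\, e_{n+1}$ by $(\gamma - s_N)^{1/p}$.

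Given input $k$, I would then set $\alpha_0$ to be a rational-point approximation of $(1-\gamma)^{-1/p}$ and, for $0 \leq n < N$, set $\alpha_{n+1}$ to be a rational-point approximation of $-(1-\gamma)^{-1/p}\, 2^{-c_n/p}$. Since $\|f_0\| = 1$ (as observed in the proof of Lemma \ref{lm:EGS}) and $\|f_{n+1}\| = 1$, the triangle inequality converts pointwise scalar errors into $\ell^p$-norm errors on $\sum_j \alpha_j f_j$. No step is a serious obstacle; the only care required is apportioning the $2^{-k}$ error budget among the three sources (the tail truncation, the single approximation to $(1-\gamma)^{-1/p}$, and the $N$ approximations to the coefficients $2^{-c_n/p}$) so that the total error is below $2^{-k}$ uniformly in $k$.
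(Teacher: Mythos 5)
Your proposal is correct and follows essentially the same route as the paper: invert the defining equation of $f_0$ to express $e_0$ as $(1-\gamma)^{-1/p}\bigl(f_0 - \sum_n 2^{-c_n/p} f_{n+1}\bigr)$, use the oracle $C$ to approximate $(1-\gamma)^{-1/p}$ and to bound the tail norm by $(\gamma - s_N)^{1/p}$, and then split the $2^{-k}$ error budget among the finitely many sources. The paper's proof differs only cosmetically (it keeps the coefficients $2^{-c_{n-1}/p}$ exact and approximates only the single scalar $q_1 \approx (1-\gamma)^{-1/p}$, whereas you also rationalize each coefficient), so no further comment is needed.
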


\begin{proof}
Fix an integer $M$ so that $(1 - \gamma)^{-1/p} < M$.  

Let $k \in \N$.  Using oracle $C$, we can compute an integer $N_1$ so that $N_1 \geq 3$ and 
\[
\norm{ \sum_{n = N_1}^\infty 2^{-c_{n - 1}/p} e_n } \leq \frac{2^{-(kp + 1)/p}}{2^{-(kp + 1)/p} + M}.
\]
We can use oracle $C$ to compute a rational number $q_1$ so that $|q_1 - (1 - \gamma)^{-1/p}| \leq 2^{-(kp + 1)/p}$.  Set
\[
g = q_1 \left[ f_0 - \sum_{n = 1}^{N_1 - 1} 2^{-c_{n-1}/p} f_n \right].
\]
It suffices to show that $\norm{e_0 - g} < 2^{-k}$.  Note that since $1 - \gamma < 1$,\\ $|q_1(1 - \gamma)^{1/p} - 1| \leq 2^{-(kp + 1)/p}$.  Note also that $|q_1| < M + 2^{-(kp +1)/p}$.   Thus, 
\begin{eqnarray*}
\norm{e_0 - g}^p & = & \norm{e_0 - q_1(1 - \gamma)^{1/p} e_0 - q_1 \sum_{n = N_1}^\infty 2^{-c_{n - 1/p}}e_n}^p\\
& \leq & |q_1 (1 - \gamma)^{1/p} - 1|^p + |q_1|^p \norm{\sum_{n = N_1}^\infty 2^{-c_{n-1}/p} e_n}^p\\
& < & 2^{-(kp + 1)} + 2^{-(kp + 1)} = 2^{-kp}
\end{eqnarray*}
Thus, $\norm{e_0 - g} < 2^{-k}$.  This completes the proof of the lemma.
\end{proof}

\begin{lemma}\label{lm:C.computes.identity}
With respect to $(E,F)$, $C$ computes a surjective linear isometry of $\ell^p$.
\end{lemma}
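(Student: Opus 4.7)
The plan is to invoke the relativization to oracle $C$ of Proposition \ref{prop:comp.sli}, specifically the direction (\ref{prop:comp.sli::itm.3}) $\Rightarrow$ (\ref{prop:comp.sli::itm.1}), with the most obvious witnessing sequence imaginable: take $g_n = e_n$ for every $n$. The corresponding linear isometry produced by the proof of Proposition \ref{prop:comp.sli} is then just the identity map on $\ell^p$, which is visibly a surjective linear isometry. So the entire task reduces to verifying, relative to $C$, the hypotheses of clause (\ref{prop:comp.sli::itm.3}): that $\{e_n\}_n$ is a sequence of unit vectors with pairwise disjoint supports, that $E$ is a generating set for $\ell^p$, and, the only nontrivial point, that $\{e_n\}_n$ is computable with respect to $F$ with oracle $C$.

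To establish the last item, I would exhibit a Turing reduction which, given oracle $C$ and inputs $n, k \in \N$, outputs rational points $\alpha_0, \ldots, \alpha_M$ with $\norm{e_n - \sum_{j=0}^M \alpha_j f_j} < 2^{-k}$, by branching on the value of $n$. In the case $n \geq 1$ we have $e_n = f_n$ by construction of $F$, so the algorithm simply outputs the finite sequence whose only nonzero entry is $\alpha_n = 1$; no oracle is consulted. In the case $n = 0$ the algorithm calls the procedure provided by Lemma \ref{lm:C.computes.e_0}, which already yields a $C$-computable approximation of $e_0$ in terms of $F$ to within any prescribed tolerance $2^{-k}$. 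The branching is uniform in $n$, so the combined procedure shows that $\{e_n\}_n$ is uniformly $C$-computable with respect to $F$.

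With clause (\ref{prop:comp.sli::itm.3}) thus verified relative to $C$, the relativization of Proposition \ref{prop:comp.sli} delivers the desired conclusion that $C$ computes a surjective linear isometry of $\ell^p$ with respect to $(E,F)$. I do not anticipate any real obstacle here; the entire content of the lemma has already been packed into Lemma \ref{lm:C.computes.e_0}, and the present lemma is essentially a routine bookkeeping step that assembles that approximation together with the triviality $e_n = f_n$ for $n \geq 1$ and feeds the result into the machinery of Proposition \ref{prop:comp.sli}.
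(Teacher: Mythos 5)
Your proposal is correct and follows exactly the route the paper intends: the paper's own proof is the one-line citation ``By Lemma \ref{lm:C.computes.e_0} and the relativization of Proposition \ref{prop:comp.sli},'' and your expansion --- taking $g_n = e_n$, using $e_n = f_n$ for $n \geq 1$ and Lemma \ref{lm:C.computes.e_0} for $n = 0$ to get uniform $C$-computability of $\{e_n\}_n$ with respect to $F$, then invoking (\ref{prop:comp.sli::itm.3}) $\Rightarrow$ (\ref{prop:comp.sli::itm.1}) relativized to $C$ --- is precisely the bookkeeping that citation leaves implicit.
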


\begin{proof}
By Lemma \ref{lm:C.computes.e_0} and the relativization of Proposition \ref{prop:comp.sli}.
\end{proof}

\section{Concluding remarks}\label{sec:conclusion}

We note that all of the steps in the above proofs work just as well over the real field.  

Lamperti's result on the isometries of $L^p$ spaces hold when $0 < p < 1$.  For these values of $p$,  $\ell^p$ is a metric space under the metric
\[
d(\{a_n\}_n, \{b_n\}_n) = \sum_{n = 0}^\infty |a_n - b_n|^p.
\]
The steps in the above proofs can be adapted to these values of $p$ as well.  

In a forthcoming paper it will be shown that $\ell^p$ is $\Delta_2^0$-categorical.

\section*{Acknowledgement}

The author thanks the anonymous referees who made helpful comments.  The author's participation in CiE 2015 was funded by a Simons Foundation Collaboration Grant for Mathematicians.

\def\cprime{$'$}

\end{document}